\newenvironment{breakablealgorithm}
  {
    \begin{center}
      \refstepcounter{algorithm}
      \hrule height.8pt depth0pt \kern2pt
      \parskip 0pt
      \renewcommand{\caption}[2][\relax]{
        {\raggedright\textbf{\fname@algorithm~\thealgorithm} ##2\par}%
        \ifx\relax##1\relax 
          \addcontentsline{loa}{algorithm}{\protect\numberline{\thealgorithm}##2}%
        \else 
          \addcontentsline{loa}{algorithm}{\protect\numberline{\thealgorithm}##1}%
        \fi
        \kern2pt\hrule\kern2pt
     }
  }
  {
     \kern2pt\hrule\relax
   \end{center}
  }
\newtheorem{theorem}{Theorem}
\newtheorem{lemma}{Lemma}
\newtheorem{remark}{Remark}
\crefname{figure}{Fig.}{Figs.}
\begin{document}

\title{Online Learning-Based Predictive Control for Nonlinear System}

\author{}
\author{Yuanqing Zhang, Huanshui Zhang

\thanks{Y. Zhang is with the School of Control Science and Engineering, Shandong University, Jinan, Shandong, China, 250061 (e-mail: 202320734@mail.sdu.edu.cn). }

\thanks{H. Zhang is with the College of Electrical Engineering and Automation, Shandong University of Science and Technology, Qingdao, Shandong, China, 266590 (e-mail: hszhang@sdu.edu.cn).}}

\markboth{Journal of \LaTeX\ Class Files,~Vol.~14, No.~8, August~2015}%
{Shell \MakeLowercase{\textit{et al.}}: Bare Demo of IEEEtran.cls for IEEE Journals}

\maketitle
\begin{abstract}
In this paper, we propose an online learning-based predictive control (LPC) approach designed for nonlinear systems that lack explicit system dynamics. Unlike traditional model predictive control (MPC) algorithms that rely on known system models to optimize controller outputs, our proposed algorithm integrates a reinforcement learning component to learn optimal policies in real time from the offline dataset and real-time data. Additionally, an optimal control problem (OCP)-based optimization framework is incorporated to enhance real-time computational efficiency while ensuring stability during online operation. Moreover, we rigorously establish the super-linear convergence properties of the algorithm. Finally, extensive simulations are performed to evaluate the feasibility and effectiveness of the proposed approach.
\end{abstract}

\begin{IEEEkeywords}
Reinforcement learning, learning-based predictive control, OCP method, nonlinear system
\end{IEEEkeywords}

\IEEEpeerreviewmaketitle

\section{Introduction}

Model predictive control (MPC), also known as receding horizon control, is a powerful tool designed to solve the problem of optimal control in a receding horizon. Because of its excellent performance, it is widely used in the fields of vehicles \cite{ref1,ref2,ref3}, power electronics \cite{ref4,ref5} and industrial process control \cite{ref6}. Consequently, the study of MPC remains of great practical significance.
 
Existing MPC methods are generally model-based, relying on predefined system dynamics. Traditional MPC solves an optimization problem in each predictive horizon, generating an optimal predictive control sequence, executing only the first control action in the sequence, then updates the state and repeats the process. This control method has enabled optimal control of both linear and nonlinear systems \cite{ref7,ref8}. However, the performance of MPC is highly contingent upon the accuracy of the model. When model information is completely unknown, learning-based predictive control (LPC) provides a viable alternative, which employs reinforcement learning (RL) to learn optimal policies. 

RL has emerged as a powerful framework for solving complex decision problems \cite{ref9,ref10}. By approaching learning tasks as interactions between agents and their environment, RL enables agents to learn optimal policies through trial and error.  Literature \cite{ref11,ref12} has researched model-based optimal control algorithms for RL. Literature \cite{ref13,ref14} has investigated the model-free RL optimal control problem. In general, an MPC controller relies on an optimizer to optimize its control policy, whereas LPC uses the RL algorithm as its optimization solver. Literature \cite{ref15} combines the policy gradient algorithm to implement LPC. In literature \cite{ref16,ref17}, Q-learning is combined with MPC control to achieve optimal control of linear systems. In literature \cite{ref18}, the RL-MPC algorithm based on policy iteration is presented. However, RL, as a data-driven algorithm, imposes a heavy computational burden, making it challenging for LPC to achieve real-time execution. Therefore, improving the computational efficiency of RL is a key research direction.

RL is generally combined with optimization methods. The effectiveness and efficiency of optimization methods significantly influence the application of RL algorithms \cite{ref19}. Traditional RL algorithms have relied on gradient descent algorithms. Reference \cite{ref20,ref21} describes the gradient descent based value iteration algorithm and policy iteration, respectively. The parameter optimization of the policy descent algorithms is presented in literature \cite{ref22}. However, the gradient descent algorithms suffer from slow convergence. Currently, RL also applies other optimization methods inspired by the Newton method, as well as its variants. A second order value iteration algorithm is proposed in literature \cite{ref23}. In literature \cite{ref24}, the approximate Newton method is used for policy search. These algorithms converge faster but require computing or approximating Hessian matrices. However, when Hessians are singular or instability occurs, convergence issues arise. Moreover, tuning hyper-parameters for these algorithms remains challenging. Therefore, there is significant research potential in finding a new class of algorithms that balances algorithm stability and convergence speed. 

In this paper, we propose an online LPC for nonlinear systems lacking explicit system dynamics. Instead of obtaining optimal predictive control sequences based on model information as in traditional MPC, the proposed method is based on a data-driven approach using RL to directly obtain optimal policy as the output of the online LPC controller. To facilitate updating in real time, the proposed algorithm incorporates optimal control problem (OCP) method to reduce the computational burden. 

Contributions to this paper are as follows:

 \begin{enumerate}
 \item We combine RL with MPC approach to form a data-driven LPC. In each horizon, the RL algorithm evaluates the value function, generates the optimal policy, and use the policy as the output of the LPC controller.
 \item The algorithm we proposed has super-linear convergence properties with less total number of iterations, and does not require the repeated recalculation of the Hessian matrix during the iterative process, consequently lowers the computational demands, making it better suited for real-time computation.
 \item Our proposed algorithm is stable and still works when the Hessian matrix is a singular matrix.

\end{enumerate}

The structure of the paper is organized as follows: The next section formulates the predictive control problem and provides the background of RL and OCP method. In Section \uppercase\expandafter{\romannumeral3}, we present an RL-based predictive control design that incorporates the OCP method. Subsequently, the convergence of the proposed algorithm is analyzed in Section \uppercase\expandafter{\romannumeral4}. Section \uppercase\expandafter{\romannumeral5} presents a simulation example to verify the effectiveness of the algorithm. Finally, Section \uppercase\expandafter{\romannumeral6} provides a summary of the paper.

\section{Background and Preliminaries}

\subsection{Formulation of Predictive Control Problem}
Consider the following non-linear system
\begin{align*}\label{1}\tag{1}
    x_{k+1}=f(x_k,u_k)
\end{align*}
where $x_k\in\mathbb R^{n}$ and $u_k\in\mathbb{R}^m$ denote the system state and the input, respectively, $k\ge 0$.  $f(x_k,u_k)\in \mathbb {R}^{n}$ is the dynamics of the system. Assume that $f(0,0)=0$ and $f(x_k,u_k)$ is Lipschitz continuous on a compact set $\Omega$ which contains the origin. 

 Consider the following generalized finite horizon cost functional:
 \begin{align*}
    \sum_{i=k}^{k+N_p-1}U(x_i,u_i)+P(x_{k+N_p})\tag{2}\label{2}
\end{align*}
where $U(.)$ denotes the cost of each predictive time step, which is related only to the state and inputs. $P(.)$ denotes the terminal cost. $N_p$ is the predictive horizon.

The objective of predictive control of system \eqref{1} is to find a optimal predictive control sequence $\textbf{u}_k^*$ minimise the performance function \eqref{2} by solving the minimization problem
\begin{align*}
    \min_{\textbf{u}_k} J(x_k,\textbf{u}_k)=& \sum_{i=0}^{N_p-1} U(x_{i|k},u_{i|k})+P(x_{N_p|k})\\
    subject \ to \ & x_{0|k}=x_k \\& x_{i|k+1}=f(x_{i|k},u_{i|k})\\& for \ i=0,1\cdots N_p-1\tag{3}\label{3}
\end{align*}
where $x_{i|k}$ represents the estimated state that simulated $i$ steps ahead of the current state $x_k$, and $\textbf{u}_k=\{u_{0|k},u_{1|k}\cdots u_{N_p-1|k}\}$ is the predictive control sequence.

In model predictive control systems, optimal predictive control sequences can be solved by various types of solvers. However, when the dynamics $f(x_k,u_k)\in \mathbb {R}^{n}$ is unknown, relying solely on the solver cannot obtain the optimal predictive control sequence.

\subsection{Reinforcement Learning}

RL is generally applied in Markov Decision Process (MDP). The stochastic state transition dynamics could described by
\begin{align*}
    \mathbb {P}_{}\left[s_+\,|\, s,a\right] \tag{4}
\end{align*}
where $s$ , $a$ represent the current state-input pair and $s^+$ is the subsequent one. 

The goal of the MDP formulation is to find a mapping from state to action that minimizes ( or maximizes ) the total reward acquired from interacting in an environment for some fixed amount of time
\begin{align*}
   \min G_t=\min\sum^N_{i=t}r(s_i,a_i)   \tag{5}\label{5}
\end{align*}
where $G_t$ is the return represents the total reward , and $r$ is a function that maps the state and inputs to a scalar reward. $N$ is the number of steps.

The return $G_t$ obtained by taking an action $a$ in a given state $s$ can be estimated and the action-value function (Q function) is built
\begin{align*}
    {\cal{Q}}(s,a)=E^\pi(\sum^{N}_{t=0}r(s_t,a_t)|s_0=s,a_0=a)
\end{align*}
where $E^\pi[.]$ denotes the expectation value with respect to the policy $\pi$ and the state transition probability.

Based on Bellman's principle, the Q function $\cal{Q}^\pi$ can be modeled, and the Bellman equation can be given
\begin{align*}
    {\cal{Q}}^\pi(s_t,a_t)&=E^{\pi} \{r(s_t,a_t)+{\cal{Q}}^\pi(s_{t+1},a_{t+1})\}.\tag{6}\label{6}
\end{align*}

The Q-function under the adoption of an optimal policy can be viewed as the optimal Q-function ${\cal Q}^*= {\cal{Q}}^{\pi^*}$. The corresponding optimal policy $\pi^*$ minimizing the total reward can be defined as 
\begin{align*}
    \pi^*(s)&=arg\min_{\pi} E^{\pi^*}(\sum^{N}_{t=0}r(s_t,a_t))\\&=\arg\min_{a}\{{\cal{Q}}^*(s_{t},a_{t})\}.\tag{7}\label{7}
\end{align*}

It can be noticed that the optimal policy \eqref{7} and the optimal representation function \eqref{6} are coupled to each other. RL iterates on Q-functions and policies in the accumulation of data from agent-environment interactions, and ultimately obtains optimal policies.

\subsection{OCP Method}
OCP method is a novel optimization method \cite{ref25}. The OCP method transforms the optimization problem being transformed into an optimal control problem, where the iterative update is designed to minimize the sum of costs at future time instants, thus theoretically giving rise to the optimal algorithm.

Consider a twice differentiable function $L(z)$:  $\mathbb R^d\xrightarrow{} \mathbb R^1$. Our optimization objective is to find the minimum of $L(z)$, i.e., we can rewrite the optimization problem $\min L(z)$ as
\begin{align*}
&\min_v \sum_{i=0}^{M} \left[ L(z_i) + \frac{1}{2} v_i^\top R_d v_i \right] + L(z_{M+1}),
\\&\text{subject to } z_{i+1} = z_i + v_i\tag{8}\label{8}
\end{align*}
where $z_i\in \mathbb{R}^d$ and $v_i\in \mathbb{R}^d$ are the state and control of system \eqref{8}, respectively, $M\ge 0$ is the control time horizon, positive definite matrix $R_d$ is the convergence matrix.

 Based on the reference \cite{ref25}, the following iterative formula can be obtained
 \begin{align*}
z_{i+1} &= z_i - \hat{g}_i(z_i), \quad i = 0, \dots, M, \\
\hat{g}_i(z_i) &= \left( R_d + L''(z_i) \right)^{-1} \left( L'(z_i) + R_d \hat{g}_{i+1}(z_i) \right), \\
\hat{g}_0(z_i) &= \left( R_d + L''(z_i) \right)^{-1} L'(z_i).\tag{9}\label{9}
\end{align*}

Based on this OCP algorithm the parameters can be iterated to obtain the optimal parameters. In Section \uppercase\expandafter{\romannumeral3}, we consider the solution of the nonlinear predictive control using the RL algorithm combined with the OCP method.

The convergence rate of the OCP method is determined by the control matrix $R_d$. As the matrix $R_d$ increases the convergence rate decreases.

\begin{remark}
    The difference between this method and Newton method is that this algorithm can continue to run when the Hessian matrix is a singular matrix. This is because the method introduces a positive definite matrix $R_d$.
\end{remark}

\section{Online LPC Controller Design }
This section describes a methodology for designing online LPC without models in order to achieve predictive control of nonlinear systems. Also, the solver for this algorithm is designed to solve it quickly with a small computational load while online.

\subsection{The Formation of LPC}

In this section, we formulate the LPC problem to be solved at each time step.  The goal of LPC is the same as that of predictive control \eqref{3}, which can be viewed as doing optimal control within a finite horizon. Using dynamic programming principles, the problem can be rewritten as
\begin{align*}
    \min J(x_{j|k})\tag{10}\label{10}
\end{align*}
when $j=0,1\cdots N_{p}-1$,
\begin{align*}
       J(x_{j|k})=&  \sum^{N_p-1}_{i=j} U(x_{i|k},u_{i|k})+P(x_{N_p|k})
\end{align*}
and $j=N_p$,
\begin{align*}
       J(x_{N_p|k})=P(x_{N_p|k})
\end{align*}
where $x_{j+1|k}=f(x_{j|k},u_{j|k})$ and $x_{0|k}=x_k$.

If the problem has a solution, then based on Bellman's principle, the Bellman equation can be obtained:

\begin{align*}
&J^*(x_{j|k})\\ =& 
\begin{cases}
\displaystyle
  \min_{u_{j|k}} \{U(x_{j|k},u_{j|k}) + J^*(x_{j+1|k})\}  & \text{if } j < N_p, \\
P(x_{N_p|k}) & \text{if } j = N_p.
\end{cases}\tag{11}\label{11}
\end{align*}

The optimal predictive control input $ u^*_{j|k}$ in optimal predictive control sequence $\textbf{u}_k^*$ can be obtained based on Bellman's equation,
\begin{align*}
    u^*_{j|k}=arg\min_{u_{j|k}}  \{U(x_{j|k},u_{j|k}) + J^*(x_{j+1|k})\}.\tag{12}\label{12}
\end{align*}

The predictive controller uses only the first element of the predictive control sequence $\textbf{u}_k$ as the output of the LPC controller at each time step
\begin{align*}
    u^*_{k}=u^*_{0|k}.\tag{13}\label{13}
\end{align*}

Since the system dynamics are not known, the Bellman equation \eqref{11} for this nonlinear system is difficult to solve. the problem is solved below using RL instead of a general solver.

\subsection{RL-Based LPC Solver }

In the framework of RL, the original problem \eqref{10} can be viewed as an MDP. At this point, it can be argued that the instantaneous cost $U(x_{j|k},u_{j|k})$ and terminal costs $P(x_{N_p|k})$ in problem \eqref{10} is equivalent to reward $r$ in RL framework \eqref{5}. 

First, consider building a Q-function that responds to the value function under the current state and inputs \cite{ref26}

\begin{align*}
&{\cal Q}(x_{j|k},u_{j|k})\\ =& 
\begin{cases}
\displaystyle
  U(x_{j|k},u_{j|k}) + {\cal Q}(x_{j+1|k},u_{j+1|k})  & \text{if } j < N_p, \\
P(x_{N_p|k}) & \text{if } j = N_p.
\end{cases}\\
    subject \ to \ & x_{0|k}=x_k \\& x_{j+1|k}=f(x_{j|k},u_{j|k})\\& for \ j=0,1\cdots N_p-1.\tag{14}\label{14}
\end{align*}

Then based on the Bellman equation \eqref{11}, the optimal Q-function can be obtained
\begin{align*}
&{\cal{Q}}^*(x_{j|k},u_{j|k})\\ =& 
\begin{cases}
\displaystyle
 U(x_{j|k},u_{j|k}) +\min_{u_{j+1|k}} {\cal{Q}}^*(x_{j+1|k},u_{j+1|k})  & \text{if } j < N_p, \\
P(x_{N_p|k}) & \text{if } j = N_p.
\end{cases}\tag{15}\label{15}
\end{align*}

Based on equation \eqref{15}, the optimal predictive control sequence $\textbf{u}_k^*=\{u^*_{0|k},u^*_{1|k}\cdots u^*_{N_p|k}\}$ is given by
\begin{align*}
    u^*_{j|k}=arg\min_{  u^*_{j|k}} {\cal Q}^*(x_{j|k},u_{j|k}).\tag{16}\label{16}
\end{align*}

In order to obtain the optimal Q-function, the RL algorithm adopts an iterative approach to optimize the Q-function, defining the initial Q-function as ${\cal Q}^{N_p}(x_{N_p|k},u_{N_p|k})=P(x_{N_p|k})$.  And for $j=N_p-1,\cdots1,0$, its update algorithm as follows
\begin{align*}  \label{17} 
&{\cal{Q}}^{j}(x_{j|k},u_{j|k})\\&= U(x_{j|k},u_{j|k})+\min_{ {u}_{j+1|k}}\{{\cal{ Q}}^{j+1}(x_{j+1|k},u_{j+1|k})\} \tag{17}.
\end{align*}

The policy updates coupled to it are as follows
\begin{align*}  \label{18}
u_{j|k}=&\arg\min_{u_{j|k}}\{{\cal{Q}}^{j}(x_{j|k},u_{j|k})\}. \tag{18}
\end{align*}

Since the solution of the policy $u_{j|k}$ is only related to the Q-function, it can be written in the form of state feedback $\pi_{j|k}(x_{j|k})=u_{j|k}$.

Then the output of the LPC controller $u_k$ can be expressed as
\begin{align*}
     u_{k}=\pi_{0|k}(x_{k}).\tag{19}\label{19}
\end{align*}
\begin{figure}
    \centering
    \includegraphics[width=1\linewidth]{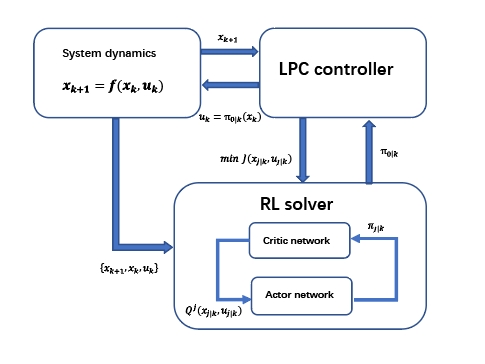}
    \caption{Online LPC Structure}
    \label{Online LPC Structure}
\end{figure}

\subsection{OCP Method Based Online RL Process }

In general, implementing predictive control for nonlinear systems requires a large number of computations. Many studies have taken an offline computing approach. In order to realize its online operation necessarily requires more efficient operation.

For nonlinear systems the Q-function needs to be represented in a parameterized form \cite{ref27}. Therefore, we take the form of a neural network to fit the Q function and the policy respectively.

\subsubsection{Design of the Critic Networks}

According to \eqref{15},\eqref{17}, we define ${\cal{Q}}^{N_p}(x_{N_p|k},u_{N_p|k})=P(x_{N_p|k})$. We fit the each the Q function using the critic network to give the following form:
for $j=N_p-1\cdots ,1,0$,
\begin{align*}
    {\cal{Q}}^j(x_{j|k},u_{j|k})&=W^j_{c}\phi(x_{j|k},u_{j|k})\tag{20}\label{20}
\end{align*}
where $W_{c}^j$ refers to the critic network's weight vector. And $\phi(.)$ represents the vector of activation function. 

Based on Eq. \eqref{17}, for the iterative error of the critic network for each update can be defined as:
for $j=N_p-1$,
\begin{align*}
    e^{N_p-1}_{c}=&U(x_{N_p-1|k},u_{N_p-1|k})+P(x_{N_p|k})\\&-{\cal{Q}}^{N_p-1}(x_{N_p-1|k},u_{N_p-1|k}).
\end{align*}

And  for $j=N_p-2,N_p-3,
\cdots0$,
\begin{align*}
    e^{j}_{c}=&U(x_{j|k},u_{j|k})+{\cal{Q}}^{j+1}(x_{j+1|k},u_{j+1|k})\\&-{\cal{Q}}^{j}(x_{j|k},u_{j|k})\tag{21}\label{21}.
\end{align*}

To minimize the iteration error, we set the loss function for each critic network to be $\xi_{c}^j={e^j_{c}}^2$. Then the weight renewal process is as follows:
\begin{align*}
   W_{c}^{j} &=arg\min_{W_{c}^{j}}\xi_{c}^j
   \\&=arg\min_{W_{c}^{j}} [W^j_{c}\phi(x_{j|k},u_{j|k})-\hat{{\cal Q}}^{j}(x_{j|k},u_{j|k})]^2\tag{22}\label{22}
\end{align*}
where $\hat{{\cal Q}}^{j}(x_{j|k},u_{j|k})=U(x_{j|k},u_{j|k})+{\cal{ Q}}^{j+1}(x_{j+1|k},{{\pi}}^{j+1}(x_{j+1|k}))$.

Equation \eqref{21} can be viewed as an optimization problem trying to find $W^j_c$ minimizing the loss function $\xi^j_c(W^j_c)$. Then the problem $\min \xi^j_c$ can be solved by the OCP method. Then the iterative update of $W^j_c$ can be rewritten in the following form
 \begin{align*}
&W_{c(i+1)}^j = W^j_{c(i)} - \hat{g}_i(W^j_{c(i)}), \quad i = 0, \dots, M, \\
&\hat{g}_i(W^j_{c(i)}) = \alpha_c {\xi^j_c}'(W^j_{c(i)})+\beta_c \hat{g}_{i-1}(W^j_{c(i)}), \\
&\hat{g}_0(W^j_{c(i)}) = \left( R_{dc} + {\xi^j_c}''(W^j_{c(i)}) \right)^{-1} {\xi^j_c}'(W^j_{c(i)})\tag{23}\label{23}
\end{align*}
where $i$ is the number of OCP method iterations. Matrix $R_{dc}$ is positive definite regulating the rate of convergence, $\alpha_c=\left( R_{dc} + {\xi^j_c}''(W^j_{c(i)}) \right)^{-1}$, and $\beta_c=\alpha_c R_{dc}$.

\subsubsection{Design of the Actor Networks}
Similarly, we can design actor networks as
\begin{align*}
    {{\pi}}_{j|k}(x_{j|k})&=W^j_{a}\theta(x_{j|k})\tag{24}\label{24}
\end{align*}
where $j=N_p-1,N_p-2,\cdots0$. $W^j_{a}$ refers to the actor network's weight vector. And $\theta(.)$ represents the vector of activation function.

The iteration error of actor networks $e_{a}^j$ can be set to
\begin{align*}
    e^j_{a}={\cal{Q}}^j_{a}(x_{j|k},\pi_{j|k}(x_{j|k})).\tag{25}\label{25}
\end{align*}

Then, the loss function of actor networks $\xi_{a}^j$ can be defined as $\xi^j_{a}={e^j_{a}}^2$. 

The weights of actor network satisfy
\begin{align*}
   W_{a}^{j} &=arg\min_{W_{a}^{j}}\xi_a^j(W^j_a)
   \\&=arg\min_{W_{a}^{j}} [W^j_{c}\phi(x_{j|k},W^j_a\theta(x_{j|k}))]^2\tag{26}\label{26}.
\end{align*}

The problem is also written as an optimization problem $\min \xi^j_a(W^j_a)$, then using the OCP method, the algorithm for updating the actor network weights is as follows
 \begin{align*}
&W_{a(i+1)}^j = W_{c(i)}^j - \hat{g}_i(W_{a(i)}^j), \quad i = 0, \dots, M, \\
&\hat{g}_i(W_{a(i)}^j) = \alpha_a {\xi^j_a}'+\beta_a \hat{g}_{i-1}(W^j_{a(i)}), \\
&\hat{g}_0(W^j_{a(i)}) = \left( R_{da} + {\xi^j_c}''(W^j_{c(i)}) \right)^{-1} {\xi^j_c}'(W^j_{a(i)})\tag{27}\label{27}
\end{align*}
where $R_{da}$ is a positive definite matrix, $\alpha_a=\left( R_{da} + {\xi^j_a}''(W^j_{a(i)}) \right)^{-1}$, and $\beta_a=\alpha_a R_{da}$.

Based on the above algorithm the optimal weights can be obtained quickly. And, based on remark 1, there is no need for the hessian matrix in the iteration to be guaranteed to be a non-singular matrix. This ensures the stability of the algorithm.

\begin{remark}
Although the OCP algorithm sets an upper iteration limit, it actually does not need to execute M iteration steps during the execution of the algorithm, and the algorithm stops early when the error is smaller than the tolerance error.
\end{remark}

\begin{remark}
    It can be seen that the algorithm runs without iterating the Hessian matrix, repeatedly. This greatly reduces the arithmetic burden. 
\end{remark}

Since the dynamics of the system \eqref{1} are unknown, the predictive control sequence cannot be obtained. But the current predicted control can be obtained based on the output of the actor network
\begin{align*}
    u_k=\pi_{0|k}(x_k).
\end{align*}

 \subsection{Algorithmic Process of Online LPC }

The procedure of the online LPC algorithm is as follows
\begin{breakablealgorithm}

	\caption{  Online LPC algorithm}
    \begin{algorithmic}[1] 

        \STATE Initialize the maximum number of time step $N$, the predictive horizon $N_p$ and the maximum OCP iterations number $M$;
       \STATE  Set the convergence matrices $R_{dc}$, $R_{da}$, and the allowed errors $\gamma$;
   
    \STATE Introduce data pool $\cal S$, containing data points  $\{x^s_l,u^s_l,x^s_{l+1}\},s\in{\{1,2\cdots |{\cal S}|\}}$ with the maximum data number in pool $\eta$;

    \FORALL{$k=0,1,\cdots N $}

  \STATE Initialize the weight vectors $W^{N_p-1}_c$, $\textit{W}_a^{N_p-1}$, respectively;
\FORALL{$j=N_p-1,N_p-2\dots 0$}   
    
      \FORALL{$i=0,1\dots M$}     
      \STATE Calculate the  $e^j_c$ of critic network ${\cal Q}^j(x,u)$ by \eqref{21} using the data in pool $\cal{S}$;

      \STATE Update the critic network weights by \eqref{23};
\IF{$||W^j_{c(i)}-W^j_{c(i-1)}||\le \gamma$ }
\STATE $W^j_{c}=W^j_{c(i)}$;
\ENDIF
\ENDFOR

 \FORALL{$i=0,1\dots M$} 
      \STATE Calculate the iteration error $\xi^j_a$ of actor network $\pi_{j|k}(x)$ by \eqref{25} using the data in pool $\cal S$;

    \STATE Update the actor network weights by \eqref{27};
    \IF{$||W^j_{a(i)}-W^j_{a(i-1)}||\le \gamma$ }
    \STATE $W^j_{a}=W^j_{a(i)}$;
    \ENDIF
    \ENDFOR

    \ENDFOR

    \STATE Execute the output of LPC controller $u_k=\pi_{0|k}(x_k)$ on agent;
\STATE Observe the data point$\{x_k,u_k,x_{k+1} \}$;
\STATE Add the data point $\{x_k,u_k,x_{k+1} \}$ to data pool $\cal S$ and delete the oldest data point;
    \ENDFOR

    \end{algorithmic}

\end{breakablealgorithm}

\begin{remark}
According to Algorithm 1, the algorithm does not require any system dynamics and therefore the algorithm is model-free.
\end{remark}

\begin{remark}
The RL algorithm built into the online LPC algorithm is an off-policy algorithm. Therefore, the policy in the data and the target policy can be different.
\end{remark}
\begin{remark}
The algorithm is an online LPC algorithm, i.e., the agent is trained based on the offline dataset while the real-time data. If the algorithm is trained entirely based on the offline dataset, avoiding the real-time data, Algorithm 1 can be transformed into an offline LPC algorithm.
\end{remark}

\section{Convergence Analysis}\label{sec:Convergence Analysis}
In this section, the convergence of RL algorithms based on the OCP method will be discussed. We will discuss the convergence of the OCP algorithm during the iteration of the single-step RL algorithm.

 \begin{lemma}
     Let $\textbf{z}_k$ be a sequence generated by an iterative method converging to a solution $z_*$. We say that the sequence $\textbf{z}_k$ converges super-linearly to $z_*$ if there exists a positive integer $p\ge1$ such that for sufficiently large $k$, 
     \begin{align*}
         \lim_{k\xrightarrow{} \infty}\frac{||z_{k+1}-z_*||}{||z_k-z_*||^p}=0.
     \end{align*}
 \end{lemma}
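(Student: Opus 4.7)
The statement as written is really a definition of what it means for a sequence to converge super-linearly, not a claim that requires a substantive proof. The phrase ``We say that the sequence $\textbf{z}_k$ converges super-linearly to $z_*$ if \ldots'' is introducing terminology, so the only thing to verify would be that the stated limit is well-posed, i.e., that $\|z_k-z_*\|\neq 0$ for all sufficiently large $k$ (otherwise the iteration has already terminated at $z_*$ and the statement holds vacuously). This is immediate.

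If instead one reads the lemma as asserting the equivalence between this limit condition and the intuitive notion of super-linear convergence, my plan would be to record two standard consequences. For $p=1$, the condition $\|z_{k+1}-z_*\|/\|z_k-z_*\|\to 0$ implies that for every $\rho\in(0,1)$ one has $\|z_{k+1}-z_*\|\le\rho\|z_k-z_*\|$ eventually, so no fixed geometric rate captures the decay; hence the convergence is strictly faster than linear. For $p\ge 2$, one rewrites the limit as $\|z_{k+1}-z_*\|=o(\|z_k-z_*\|^p)$, which coincides with the usual definition of $p$-th order convergence, and under smoothness of the iteration map this can be derived by Taylor-expanding about $z_*$.

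The main purpose of stating this lemma, I expect, is to fix the yardstick for the convergence theorem that will follow, where the OCP-based updates in \eqref{23} and \eqref{27} are to be shown to satisfy the inequality with some $p>1$. The real obstacle therefore does not lie here but at the next step: one must argue that $W_{c(i+1)}^j = W_{c(i)}^j - \hat g_i(W_{c(i)}^j)$, with $\hat g_0=(R_{dc}+{\xi_c^j}'')^{-1}{\xi_c^j}'$ and the recursively defined correction $\hat g_i$, inherits the super-linear rate of the Newton-type base step. That will presumably require expanding ${\xi_c^j}'$ around the local minimizer, using Lipschitz continuity of ${\xi_c^j}''$ to bound the residual by $\|W_{c(i)}^j-W_c^{j*}\|^2$, and then showing that the added $\beta_c\hat g_{i-1}$ term is of higher order and does not spoil the limit in Lemma~1.
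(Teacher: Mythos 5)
You are right that this ``lemma'' is in fact a definition of super-linear convergence, and the paper itself supplies no proof for it, which is consistent with your reading. Your observation that the real work is deferred to the subsequent convergence theorem for the OCP updates matches how the paper uses this statement, so nothing further is needed here.
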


\begin{lemma}
    Since $L'(z_k)$ is a factor of every $\hat{g}_i(z_k)$, based on \eqref{9}, it follows that if  $L'(z_k)=0$  is satisfied, then have $\hat{g}_i(z_k)=0$, for $i=0,1\cdots M$.
\end{lemma}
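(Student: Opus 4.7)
The plan is to prove this by a straightforward induction on the iteration index $i$, using the recursive definition (9) to show that $L'(z_k)$ appears as a common factor in every $\hat{g}_i(z_k)$; the vanishing conclusion then follows immediately.

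For the base case, I would invoke the explicit formula $\hat{g}_0(z_k) = (R_d + L''(z_k))^{-1} L'(z_k)$. Since $R_d$ is positive definite and $L''$ is the Hessian of a twice differentiable function, the matrix $R_d + L''(z_k)$ is invertible (this is one of the structural advantages of the OCP method highlighted in Remark~1), so $\hat{g}_0(z_k)$ is well defined and is manifestly a linear operator applied to $L'(z_k)$. In particular, if $L'(z_k) = 0$ then $\hat{g}_0(z_k) = 0$.

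For the inductive step, suppose $\hat{g}_{i-1}(z_k) = c_{i-1}(z_k)\, L'(z_k)$ for some matrix factor $c_{i-1}(z_k)$ (equivalently, $L'(z_k)$ is a factor of $\hat{g}_{i-1}$). Substituting into the recursion gives
\begin{align*}
\hat{g}_i(z_k) &= \bigl(R_d + L''(z_k)\bigr)^{-1}\bigl(L'(z_k) + R_d\, \hat{g}_{i-1}(z_k)\bigr)\\
&= \bigl(R_d + L''(z_k)\bigr)^{-1}\bigl(I + R_d\, c_{i-1}(z_k)\bigr) L'(z_k),
\end{align*}
so $L'(z_k)$ again factors out. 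Setting $L'(z_k)=0$ collapses this expression to zero, completing the induction through $i = 0,1,\ldots,M$.

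There is essentially no technical obstacle here: the whole content of the lemma is algebraic factoring along the recursion (9). The only subtle point worth flagging is making sure the sign/indexing of the recursion is handled consistently with the base case $\hat{g}_0$ (the recursion as written in (9) uses $\hat{g}_{i+1}$, but the implementations in (23) and (27) employ the forward form $\hat{g}_{i-1}$); either reading permits the same induction, run either from $i=0$ upward or from $i=M$ downward, and the invertibility of $R_d + L''(z_k)$ guaranteed by positive definiteness of $R_d$ ensures that no step is compromised even if $L''(z_k)$ is singular.
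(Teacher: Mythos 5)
Your induction is correct and is exactly the argument the paper intends: Lemma~2 is justified in the text only by the one-line observation that $L'(z_k)$ is a common factor of every $\hat{g}_i(z_k)$ in the recursion \eqref{9}, and your base case plus inductive step simply make that observation rigorous (including the sensible handling of the forward/backward indexing discrepancy between \eqref{9} and \eqref{23}--\eqref{27}). The only small caveat is that invertibility of $R_d + L''(z_k)$ does not follow from positive definiteness of $R_d$ alone unless $L''(z_k)$ is assumed positive semidefinite (or at least not cancelling $R_d$); but this is an assumption the paper itself makes implicitly for \eqref{9} to be well defined, not a gap specific to your argument.
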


\begin{theorem}
At each iteration step of the update process, the OCP method can be realized to minimize the loss function $\xi^j$ with sufficient data points guaranteed. The weights $W^j_c$ are guaranteed to converge super-linearly to the optimal weight $W^j_{*}$, where the sequence $\{W^j_{(1)},W^j_{(2)}\cdots W^j_{(M)}\}$ is guaranteed to satisfy
\begin{align*}
    |W^j_{(i+1)}&-W^j_{*}|\\&= || (R_d+ {\xi^j}''(W^j_{*}))^{-1}R_d  ||^{i+1}|W^j_{(i)}-W^j_{*}|.
\end{align*}
\end{theorem}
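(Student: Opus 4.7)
The plan is to reduce the outer OCP iteration to an essentially linear recursion near the optimum and then exploit the algebraic structure of the step formula. First I would set $\delta_i := W^j_{(i)}-W^j_*$, observe that at the optimum the loss gradient vanishes, and invoke Lemma 2 to conclude $\hat g_i(W^j_*)=0$ for every $i$, so $W^j_*$ is a fixed point of the outer iteration. Next I would Taylor-expand about $W^j_*$,
\begin{align*}
{\xi^j}'(W^j_{(i)}) &= {\xi^j}''(W^j_*)\,\delta_i + O(\|\delta_i\|^2),\\
{\xi^j}''(W^j_{(i)}) &= {\xi^j}''(W^j_*) + O(\|\delta_i\|),
\end{align*}
and note that for the critic network the loss $\xi^j_c$ is quadratic in $W^j_c$, so these expansions are exact; for the actor they are correct to leading order, and the theorem's equality should then be read as the dominant-order relation.

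The core of the argument is the closed-form evaluation of $\hat g_i$. I would abbreviate $\alpha := (R_d + {\xi^j}''(W^j_*))^{-1}$ and $\beta := \alpha R_d$ and use the elementary identity $\alpha\, {\xi^j}''(W^j_*) = I - \beta$. Unfolding the recursion $\hat g_i = \alpha\, {\xi^j}' + \beta\,\hat g_{i-1}$ from the base case $\hat g_0 = \alpha\, {\xi^j}'$ then yields a geometric sum that telescopes:
\begin{align*}
\hat g_i(W^j_{(i)}) = \Bigl(\sum_{k=0}^{i} \beta^{k}\Bigr)(I-\beta)\,\delta_i = \bigl(I - \beta^{i+1}\bigr)\delta_i.
\end{align*}
Substituting back into $W^j_{(i+1)} = W^j_{(i)} - \hat g_i$ gives $\delta_{i+1} = \beta^{i+1}\delta_i$, from which the claimed identity $\|\delta_{i+1}\| = \|\beta\|^{i+1}\|\delta_i\|$ falls out directly.

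To close the super-linear conclusion I would argue that $\|\beta\|<1$ under the stated "sufficient data" hypothesis, because ${\xi^j}''(W^j_*)$ is the outer-product Gram matrix of the activations and is therefore positive definite once the data pool contains enough linearly independent samples; combining $R_d\succ 0$ and ${\xi^j}''(W^j_*)\succ 0$ gives $\|(R_d+{\xi^j}''(W^j_*))^{-1}R_d\|<1$. Then $\|\beta\|^{i+1}\to 0$ monotonically in $i$, so $\|\delta_{i+1}\|/\|\delta_i\|\to 0$, which is exactly the $p=1$ case of the super-linear criterion in Lemma 1. The step I expect to be the main obstacle is the telescoping manipulation itself—getting the identity $\alpha\,{\xi^j}''(W^j_*)=I-\beta$ into the right place so the sum collapses cleanly—and, for the actor case, a brief big-$O$ remainder bound showing that the higher-order Taylor terms do not spoil the super-linear rate.
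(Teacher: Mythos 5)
Your proposal is correct and follows the same skeleton as the paper's proof: the fixed point $\hat g_i(W^j_*)=0$ via Lemma 2, linearization about $W^j_*$, identification of the contraction factor $\bigl((R_d+{\xi^j}''(W^j_*))^{-1}R_d\bigr)^{i+1}$, and the appeal to Lemma 1. Where you genuinely diverge is in how that factor is obtained: the paper Taylor-expands $\hat g_i$ as a map and simply asserts $I-\hat g_i'(W^j_*)=\beta^{i+1}$ without computation, whereas you unroll the recursion $\hat g_i=\alpha\,{\xi^j}'+\beta\,\hat g_{i-1}$, substitute ${\xi^j}'(W^j_{(i)})\approx{\xi^j}''(W^j_*)\,\delta_i$, and telescope using $\alpha\,{\xi^j}''(W^j_*)=I-\beta$ to get $\hat g_i=(I-\beta^{i+1})\delta_i$. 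This supplies exactly the justification the paper omits, and your remark that the critic loss is quadratic in $W^j_c$ (so the expansion is exact there) is a genuine sharpening. Two cautions: first, $\delta_{i+1}=\beta^{i+1}\delta_i$ yields $\|\delta_{i+1}\|\le\|\beta\|^{i+1}\|\delta_i\|$ in general, not the equality stated (a defect inherited from the theorem itself); second, your insistence that the activation Gram matrix be positive definite under sufficient data is actually necessary, since the paper's condition ${\xi^j}''(W^j_*)\ge 0$ alone gives only $\|\beta\|\le 1$ when the Hessian is singular, and the strict contraction needed for the super-linear conclusion would fail on the null space. Your version is the more careful of the two.
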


\begin{proof}
    
The updating of the network satisfies equation \eqref{23} and \eqref{27},  the updating process is to minimise the loss function 
\begin{align*}
  \min \xi^{j}(W^j).
   \tag{28}\label{28}
\end{align*}

Based on Lemma 2, assume ${\xi^j}'(W^j_{*}) = 0$, then have $\hat{g}_i(W^j_{*})=0$, for $i=0,1\cdots M$.

Combined with Eq. \eqref{15}, the following relationship can be obtained
\begin{align*}
    &W^j_{(i+1)}-W^j_{*}
    \\=&W^j_{(i)}-W^j_{(i)}-\hat{g}_i(W^j_{(i)})
    \\=&W^j_{(i)}-W^j_{(i)}-[\hat{g}_i(W^j_{*})+\hat{g}_{i}'({W^j_{*}})(W^j_{(i)}-W^j_{*})\\&+o(|W^j_{(i)}-W^j_{*}|)]
    \\\approx&W^j_{(i)}-W^j_{(i)}-[\hat{g}_{i}'({W^j_{*}})(W^j_{(i)}-W^j_{*}))]
    \\=&(I-\hat{g}_{i}'({W^j_{*}}))(W^j_{(i)}-W^j_{*})
    \\=&((R_d+ {\xi^j}''(W^j_{*}))^{-1}R  )^{i+1}(W^j_{(i)}-W^j_{*})\tag{29}\label{29}
\end{align*}
where $o(|W^j_{(i)}-W^j_{*}|)$ is the higher order approximation term. 

Based on \eqref{29} it is known that $       \frac{||W^j_{(i+1)}-W^j_{*}||}{||W^j_{(i)}-W^j_{*}||}=|| (R_d+ {\xi^j}''(W^j_{*}))^{-1}R_d  ||^{i+1}$.

Since $W^j_{*}$, is a minimal value of the function $  \xi^{j}(W^j)$,  it must satisfy ${\xi^j}''(W^j_{*})\ge 0$, then have $|| (R_d+ {\xi^j}''(W^j_{*}))^{-1}R_d  ||< 1$.

Thus we learn that, for a sufficiently large number of iterations $i$, then have $\lim_{i\rightarrow {\infty}}((R_d+ {E^j}''(W^j_{*}))^{-1}R_d  )^{i+1}=0$. Therefore, hold,
\begin{align*}
   \lim_{i\xrightarrow{} \infty} \frac{||W^j_{(i+1)}-W^j_{*}||}{||W^j_{(i)}-W^j_{*}||}=0.
\end{align*}

It can be seen that our algorithm is consistent with Lemma 1, converging super-linearly, so the value of the number of iterations $i$ does not need to be large to converge to the optimal weights. 

Obviously, the weights $W^j$ are guaranteed to converge to the optimal weight $W^j_{*}$. 

The proof is complete.
\end{proof}

\begin{remark}
Understood from other perspectives, the OCP method is generated based on the optimal control problem, which minimizes the control effort while minimizing the sum of the indicators, so the method guarantee that the weights $ W^j$ will be stabilized at the optimal solution $ W^j_{*}$.
\end{remark}

\section{Experiments}
In this section, we present the experimental results of our proposed online LPC algorithm and compare it with the standard LPC algorithm to solve the linear and nonlinear predictive control problem.  All our experiments are operated on the same computer with a Intel Core i5-12100 processor and Matlab R2020a.

\subsection{Linear System Control} \label{linear system}
Consider a discrete-time linear system
\begin{align*}
    x_{k+1}=Ax_k+Bu_k \tag{30}\label{30}
\end{align*}
where $x_k=[x_{1k},x_{2k} ]^\top$ and $u_k$ is the input. The system matrices are expressed as 
\begin{align*}
    A=\begin{bmatrix}
        0.6 & 2\\ 1.5& 0.85
    \end{bmatrix},B=\begin{bmatrix}
        0\\ 0.5
    \end{bmatrix}. 
\end{align*}

Based on \eqref{2}, the control weights are defined the cost function $U(x,u)=x^\top Qx+u^\top R u$, and the terminal cost $P(x)=x^\top Sx$ with $R=1$ and $S=Q=5I$, where $I$ is the identity matrix with appropriate dimensions. The initial state of the system is chosen as $x_0=[1,-0.5]^\top$. And the predictive horizon is $N_p=10$. 

Based on the LQ algorithm \cite{ref28}, we know that the Riccati matrix $P_{10}$ and the corresponding optimal feedback matrix $K_{10}$ for this system can be expressed as
\begin{align*}
     P_{10}=\begin{bmatrix}
        20.20 & 23.56\\ 23.56& 57.67
    \end{bmatrix},K_{10}=\begin{bmatrix}
        3.26&3.12
    \end{bmatrix}.\tag{31}\label{31}
\end{align*}

Algorithm 1 is considered to be applied to achieve predictive control of the system \eqref{30}. Since the system is linear, the activation function we can choose for the critic network is $\phi(x_k,u_k)=[x_{1k}^2,x_{2k}^2,x_{1k}x_{2k},x_{1k}u_k,x_{2k}u_k,u_k^2]^\top$, and for the actor network is $\theta(x_k)=[x_{1k},x_{2k}]^\top$. The data pool size is $|{\cal{S}}|=30$. The convergence matrices for both networks are $R_{dc}=R_{dc}=0.1I$. The control horizon of OCP method is $M=100$, and the RL iterative tolerance error $\gamma=10^{-5}$ . The initial weight values $W_c^{N_p-1}$ and $W^{N_p-1}_a$ are randomly drawn from $[-1,1]$.

We control the system \eqref{30} based on the above online LPC algorithm, and Figure. \ref{fig:1} shows the state trajectory. Figure. \ref{fig:2} represents the trajectory of the control input.

\begin{figure}
    \centering
    \includegraphics[width=0.7\linewidth]{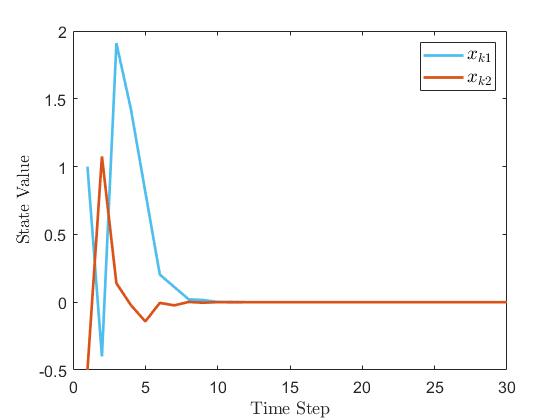}
    \caption{State trajectory for linear system}
    \label{fig:1}
\end{figure}

\begin{figure}
    \centering
    \includegraphics[width=0.7\linewidth]{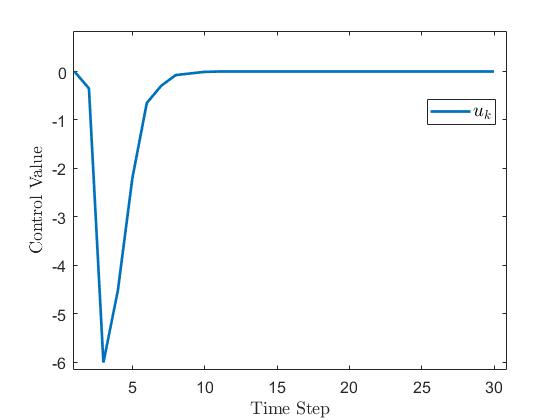}
    \caption{Control input trajectory for linear system}
    \label{fig:2}
\end{figure}

According to Figures. \ref{fig:1} and \ref{fig:2}, it can be seen that the online LPC algorithm can be realized to make the system \eqref{30} stable.

Based on the introduction in Section III, Algorithm 1 has a fast convergence rate. Its convergence speed is analyzed below.
We similarly set up the traditional gradient descent based LPC algorithm for comparison. The networks for both LPC algorithms use the same activation functions as above. The learning rate is set to $\alpha=0.05$. All other settings are the same as in Algorithm 1 above. Figure. \ref{fig:3} compares the convergence speed of the two RL algorithms in each horizon.

\begin{figure}[htbp]
  \centering
  \subfloat[]
  {\includegraphics[width=0.2\textwidth]{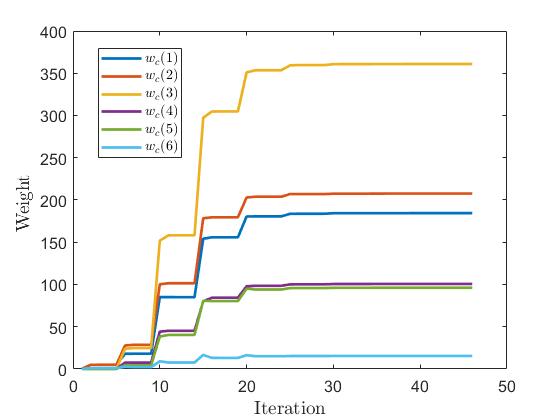}\label{fig:subfig3:1}}
   \     
  \subfloat[]
  {\includegraphics[width=0.2\textwidth]{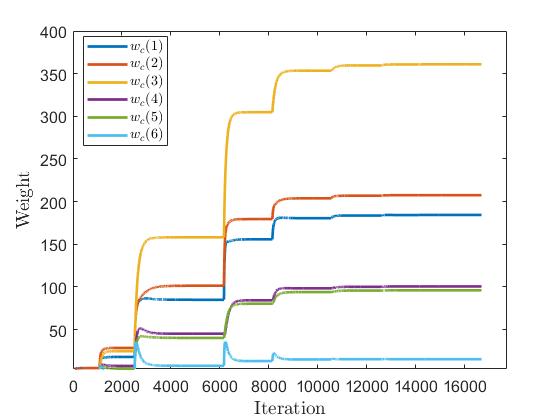}\label{fig:subfig3:2}}
   \
   
  \subfloat[]
  {\includegraphics[width=0.2\textwidth]{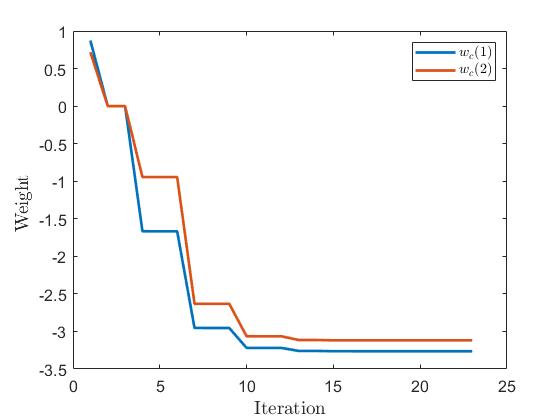}\label{fig:subfig3:3}}
   \
  \subfloat[]
  {\includegraphics[width=0.2\textwidth]{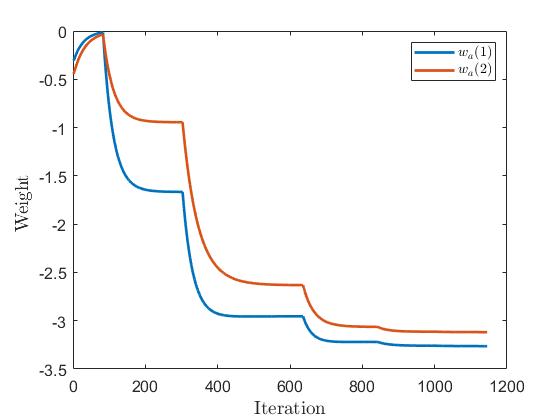}\label{fig:subfig3:4}}
   \
  \caption{Convergence analysis of critic and actors’ weights. (a) Trajectory of critic’s weights with proposed algorithm.  (b) Trajectory of critic’s weights with traditional algorithm.  (c) Trajectory of actor’s weights with proposed algorithm. (d) Trajectory of actor’s weights with traditional algorithm.  }\label{fig:3}
\end{figure}

 Figure. \ref{fig:3} shows that the weights of actor network finally converge to $W_c^0=[-3.264,-3.118]$. And based on Eq. \eqref{31}, the optimal predictive law is $u_k=-K_{10}x_k$. It can be verified that the proposed algorithm finds the optimal policy. The RL algorithm based on the OCP method has a fast convergence rate. Both actor network and critical network, the recommended algorithms converge within 50 iterations. And the gradient based LPC algorithm has much higher number of iterations than the recommended algorithm.

\subsection{Non-linear System Control}
Consider the following nonlinear system
\begin{align*}
  &  x_{k+1}=\\&\begin{bmatrix}
        \sin{(x_{k1})}+0.1x_{k2}+0.1x_{k1}^2\\-1.2x_{k1}+0.8x_{k2}+0.1\sin{(u_k+x_{k1})}+0.2x_{k2}u_k
    \end{bmatrix}\tag{32}\label{32}
\end{align*}
where $x_k=[x_{1k},x_{2k} ]^\top$ and $u_k$ is the input. 

The initial state value $x_k=[0.9 , -0.7]^\top$, the control weights are defined the cost function $U(x,u)=x^\top Qx+u^\top R u$, and the terminal cost $P(x)=x^\top Sx$ with $R=1$ and $S=Q=2I$, where $I$ is the identity matrix with appropriate dimensions. The predictive horizon is $N_p=10$. Compared to linear system \eqref{30}, nonlinear system \eqref{32} is more complex and therefore require more complex activation functions for both networks. The activation function of critic network is re-defined as $\phi(x_k,u_k)= 
[x_{1k}^2,x_{2k}^2,x_{1k}x_{2k},x_{1k}u_k,x_{2k}u_k,u_k^2,x_{1k}^3,x_{2k}^3,x_{1k}^2x_{2k},\\x_{1k}x_{2k}^2, x_{1k}^2u_k,x_{2k}^2u_k,x_{1k}x_{2k}u_k ]$, as well as the actor network's activation function $\theta(x_k)=[x_{1k},x_{2k},x_{1k}^2,x_{2k}^2,x_{1k}x_{2k}]$. The rest of the configuration is the same as in Subsection V-A. 
Figures. \ref{fig:4} and \ref{fig:5} illustrate the change in state trajectories and the change in control input trajectories during the online LPC iteration process. 

\begin{figure}[H]
    \centering
    \includegraphics[width=0.7\linewidth]{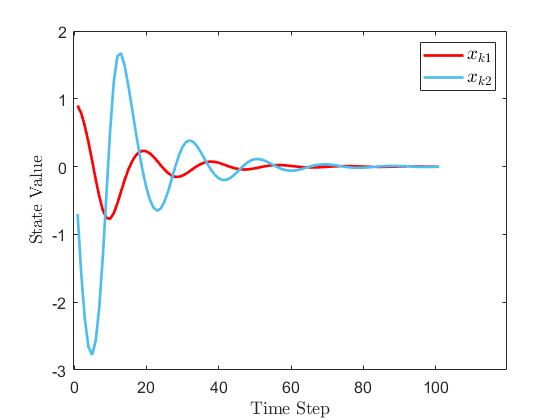}
    \caption{State trajectory for non-linear system}
    \label{fig:4}
\end{figure}

\begin{figure}[H]
    \centering
    \includegraphics[width=0.7\linewidth]{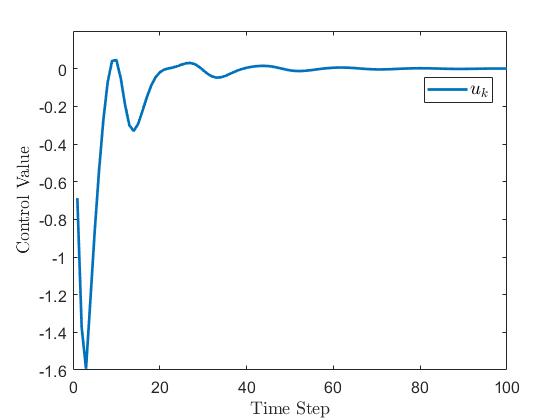}
    \caption{Control input trajectory for non-linear system}
    \label{fig:5}
\end{figure}

It can be seen that the online LPC algorithm can stabilize the nonlinear system \eqref{32}. Then, Figure. \ref{fig:6} illustrates the convergence of the networks weights with in each predictive horizon and compares the number of iterations required for the convergence of the RL algorithms based on the OCP method and gradient descent, respectively.
\begin{figure}[htbp]
  \centering
  \subfloat[]
  {\includegraphics[width=0.2\textwidth]{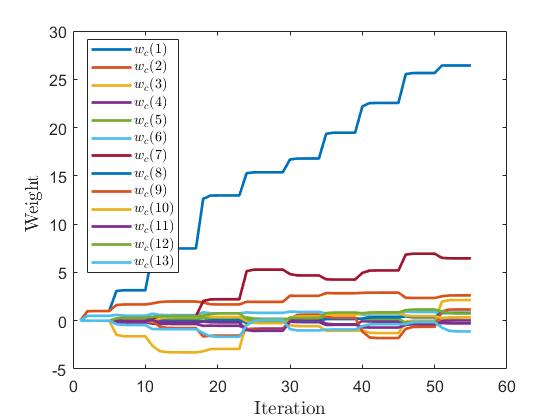}\label{fig:subfig6:1}}
   \     
  \subfloat[]
  {\includegraphics[width=0.2\textwidth]{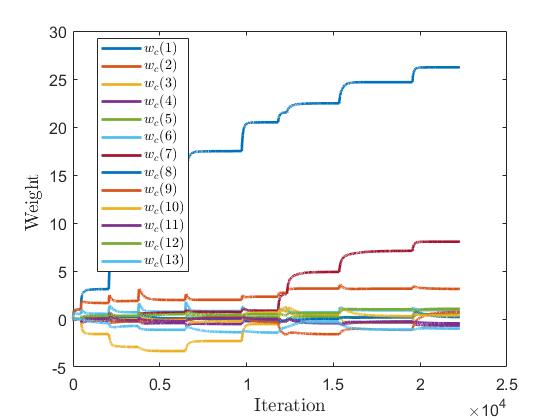}\label{fig:subfig6:2}}
   \
   
  \subfloat[]
  {\includegraphics[width=0.2\textwidth]{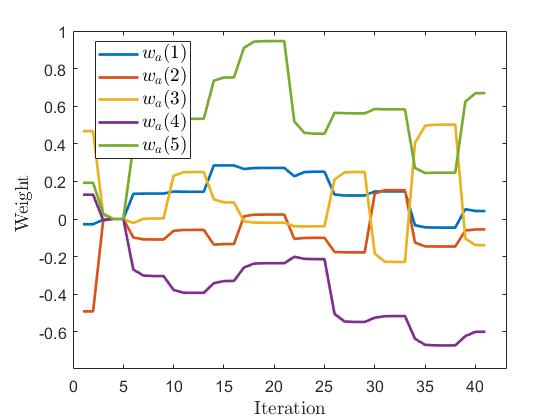}\label{fig:subfig6:3}}
   \
  \subfloat[]
  {\includegraphics[width=0.2\textwidth]{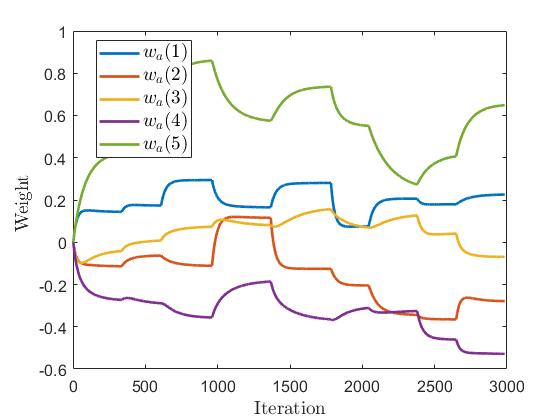}\label{fig:subfig6:4}}
   \
  \caption{Convergence analysis of critic and actors’ weights. (a) Trajectory of critic’s weights with proposed algorithm.  (b) Trajectory of critic’s weights with traditional algorithm.  (c) Trajectory of actor’s weights with proposed algorithm. (d) Trajectory of actor’s weights with traditional algorithm.  }\label{fig:6}
\end{figure}

\begin{table*}[htbp]
    \centering
    \begin{tabular}{|c|c|c|c|} \hline  
         System& LPC solver& Average run time &Iterations\\ \hline  
          \multirow{2}*{linear system \eqref{30}}&  Gradient descent based RL& 0.55s&16032\\   \cline{2-4} ~
 & OCP method based RL& 0.05s&43\\\hline   
         \multirow{2}*{non-linear system \eqref{32}}&  Gradient descent based RL& 1.07s&24150\\ \cline{2-4}
 ~& OCP method based RL& 0.05s&48\\ \hline 
    \end{tabular}
   
    \caption{Comparison average over 20 trials in each horizon }
    \label{tab:tab1}
\end{table*}

Comparing Fig. \ref{fig:subfig6:1} and Fig. \ref{fig:subfig6:2}, the more complex the system is, the slower the algorithm converges. However, it can be seen that the recommended algorithm converges much faster than the traditional LPC algorithm based on gradient descent algorithm. According to the analysis of Fig. \ref{fig:subfig6:1}, after iterations the final networks weights $W_c^0$ and $W^0_a$ finally converges to $W^0_c=[18.784,3.254,2.617,-0.213,0.557,1.483,-6.122,0.260,\\5.013,1.158,0.3624,0.944,0.272]$, and $W_a^0=[0.042,-0.055,-0.135,-0.599,0.670]$. The above experiments prove that the proposed algorithm can find the optimal policy faster and requires fewer iterations, which makes it more suitable for solving online.

Next we compare the computational efficiency of each algorithm. We compute the average computation time and number of iterations for RL in each predictive horizon. The convergence time of each algorithm is recorded for 20 predictive horizons with $N_p=10$ for each of the two systems \eqref{30}, \eqref{32} and the Table 1 is obtained.

It can be seen that the recommended algorithm has less computational load while its computation time is shorter. Online LPC algorithms require real-time computation and require shorter runtime. It can be seen that the gradient descent based LPC algorithm can only operate offline when the runtime is less than $0.06s$. While the recommended algorithm can continue to run in real time.

\subsection{Non-linear Trajectory Tracking Control}

We consider a nonlinear system, namely Van der Pol's oscillator \cite{ref29}, its dynamic equations are as follows
\begin{align*}
    &\dot x_1=x_2\\
    &\dot x_2=(1-x_1^2)x_2-x_1+5u  \tag{33}\label{33}.
\end{align*}

Next, the model \eqref{33} is discretized, when the sampling time is $\Delta t=0.1s$, then the discrete model is as follows
\begin{align*}
  x_{k+1}=\begin{bmatrix}
      x_{k1}+0.1x_{k2}\\
      x_{k2}+0.1(1-x_{k1}^2)x_{k2}-0.1x_{k1}+0.5u_k
  \end{bmatrix}\tag{34}\label{34}.
\end{align*}

We show its reference trajectory $r_k=[r_{k1} \ r_{k2}]^\top$. The reference trajectory dynamics are as follows
\begin{align*}
    r_{k+1}=\begin{bmatrix}
        \sin{(0.1k)}\\\cos(0.1k)
    \end{bmatrix}.
\end{align*}

The initial state value $x_k=[2 , -1]^\top$, and the initial tracking trajectory is $r_k=[1 , 0]^\top$. After that, we define the tracking error $e_k=[x_{k1}-r_{k2} , x_{k2}-r_{k2}]^\top$.

The reward function for tracking control is expressed as $U(e,u)=e^\top Qe+u^\top Ru$, accompanied by a terminal cost defined as $P(e)=e^\top Se$. In this context, the parameters are set to $R=1$ and $S=Q=10I$, where $I$ represents the identity matrix of suitable dimensions. The predictive horizon is established at $N_p=10$. 

For ease of computation, we combine the system dynamics and trajectory dynamics into the augmented system $X_k=[x_{k1} , x_{k2} , r_{k1} , r_{k2}]^\top$. The basis function $\phi(X_k,u_k)$ is chosen to be polynomials of multiple orders in $X_k$ and $u_k$.  And the basis function $\theta(X_k)$ is picked as polynomials of multiple orders in $X_k$.

To validate the tracking performance and tracking accuracy of the algorithm, a set of controlled experiments are shown then. Here PID controller is used for tracking control of this system. Its control law is regulated by trial-and-error to find the following control law
\begin{align*}
    u_k=&-K_{P1}e_{k1}-K_{P2}e_{k2}-K_{I1}\sum^k_{i=0}e_{i1}-K_{I2}\sum^k_{i=0}e_{i2}\\&-K_{D1}(e_{k1}-e_{k-1,2})-K_{D2}(e_{k2}-e_{k-1,2})
\end{align*}
where $K_{P1}=0.9,K_{P2}=0.8$ ,$K_{I1}=K_{I2}=0.5$ and $K_{D1}=K_{D2}=0.01$.

The following figure describes the tracking trajectory of the system \eqref{34} under the two control methods.
\begin{figure}[htbp]
  \centering
  \subfloat[]
  {\includegraphics[width=0.2\textwidth]{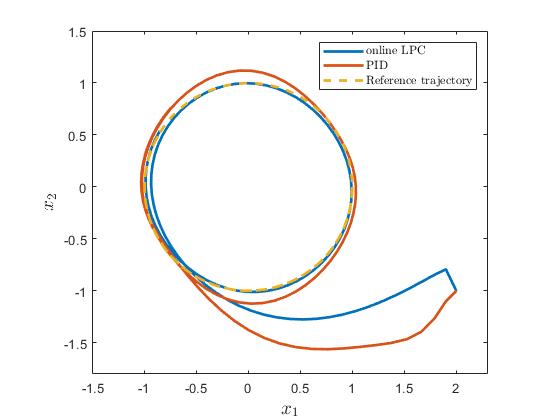}\label{fig:subfig8:1}}
   \     
  \subfloat[]
  {\includegraphics[width=0.2\textwidth]{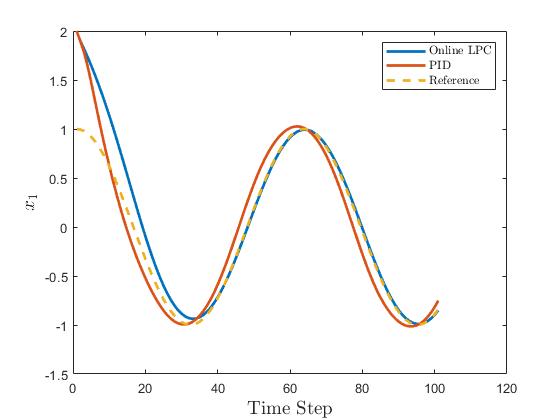}\label{fig:subfig8:2}}
   \
   
  \subfloat[]
  {\includegraphics[width=0.2\textwidth]{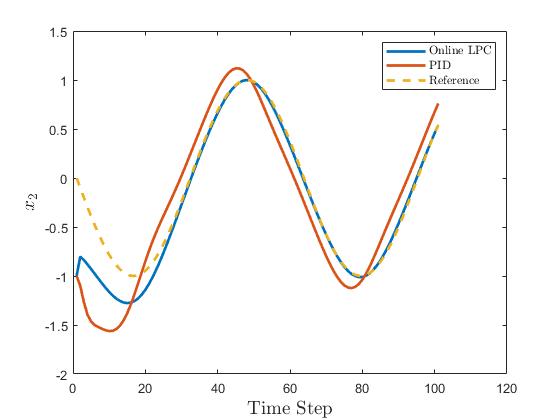}\label{fig:subfig8:3}}
   \
  \subfloat[]
  {\includegraphics[width=0.2\textwidth]{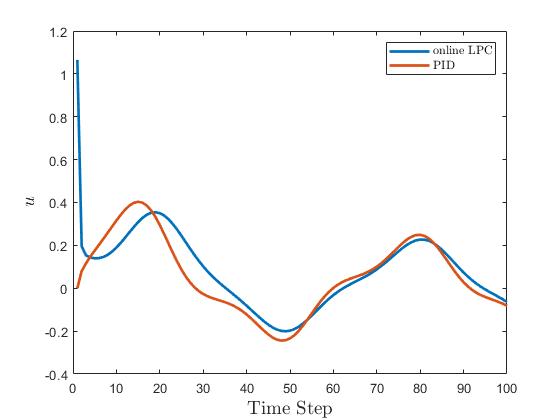}\label{fig:subfig8:4}}
   \
  \caption{Convergence analysis of critic and actors’ weights. (a) Comparison of tracking trajectories.  (b) Comparison of tracking performance of $x_1$ trajectories.  (c) Comparison of tracking performance of $x_2$ trajectories.(d) Comparison of control input of both control method.  }\label{fig:8}
\end{figure}

Figure \ref{fig:8} illustrates the tracking effect of the online LPC algorithm. As shown in Fig. \ref{fig:subfig8:1}, although the PID algorithm has been well-tuned, the recommended algorithm has a smaller tracking error and faster response time compared to the PID algorithm. Figure \ref{fig:subfig8:2} and \ref{fig:subfig8:3} show that our proposed algorithm can track reference trajectory either in $x_1$ or $x_2$ dimensions with tracking errors less than 0.04 and outperforms the PID algorithm in both cases. Figure \ref{fig:subfig8:4} illustrates that the control input of our proposed algorithm is smoother.

After several tests, the operation time of the online LPC algorithm is around 0.06s, which is less than the sampling time $\Delta t$ and cannot be achieved by the gradient descent based LPC algorithm, thus the proposed algorithm can run online.

\section*{Conclusion}
In this paper, we propose an online LPC algorithm to address the predictive control of nonlinear systems without requiring knowledge of system dynamics in real time. To ensure online operation and stability, we combine the OCP method with the RL algorithm as the LPC solver. The proposed algorithm has a low computational load, significantly reducing runtime. Moreover, the algorithm is stable even when the Hessian matrices become singular during the computation. We have demonstrated that the algorithm converges to both the optimal critic and actor networks. Simulations reveal that the proposed algorithm is efficient and suitable for online operation. This paper focuses on solving online LPC problems, and future work will explore LPC with constraints for practical applications.

\ifCLASSOPTIONcaptionsoff
  \newpage
\fi

\bibliographystyle{IEEEtran}
  
\bibliography{ref}

\end{document}